\newtheorem{theorem}{Theorem}[section]
\begin{document}

\begin{frontmatter}

% "Title of the Paper"
\title{An infinite cardinal version  of Gallai's Theorem for colorings of the plane}
%\thankstext{t1}{???}
\runtitle{Infinite cardinal Gallai}

% indicate corresponding author with \corref{}
% \author{\fnms{John} \snm{Smith}\thanksref{t2}\corref{}\ead[label=e1]{smith@foo.com}\ead[label=e2,url]{www.foo.com}}
% \thankstext{t2}{Thanks to somebody}
% \address{line 1\\ line 2\\ \printead{e1}\\ \printead{e2}}

\author{\fnms{Jeremy F.} \snm{Alm}\ead[label=e1]{alm.academic@gmail.com}}
\address{Department of Mathematics\\ Illinois College\\ Jacksonville, IL 62650\\ \printead{e1}}

\runauthor{J.~F.~Alm}

\renewcommand{\thefootnote}{\fnsymbol{footnote}}
\footnotetext{Keywords: Gallai's Theorem, homothety, infinite cardinal, combinatorial
geometry \\ 2010 MSC codes 05D10, 52C10, 05C50}
\renewcommand{\thefootnote}{\arabic{footnote}}

\begin{abstract}
We generalize a result of Tibor Gallai as follows: for any finite
set of points $\mathcal{S}$ in the plane,  if the plane is colored
in finitely many colors, then there exist $2^{\aleph_0}$
monochromatic subsets of the plane homothetic to $\mathcal{S}$.
Furthermore, we prove an even stronger result for $n$-dimensional
Euclidean space.

\end{abstract}
% history:
% \received{\smonth{1} \sday{1}, \syear{0000}}

%\tableofcontents

\end{frontmatter}

\section{Introduction}
\subsection{History}
%Let $\mathbb{E}^2$ denote the Euclidean plane, and let $\mathbb{E}^n$ denote Euclidean $n$-space.
In the late 1930s, Tibor  Gallai proved the following result, which
is a generalization to multiple dimensions of Van der Waerden's
celebrated theorem on arithmetic progressions in the integers:

\begin{theorem}[Gallai's Theorem on $\mathbb{Z}^n$] \label{GZ}
   Let $\mathcal{S}$ be a finite subset of $\mathbb{Z}^n$.  Then any finite coloring of $\mathbb{Z}^n$ contains a monochromatic subset homothetic to $\mathcal{S}$, i.e. a monochromatic subset of the form $\mathbf{a}+b\mathcal{S}$ for some $\mathbf{a}\in\mathbb{Z}^n$ and $b\in\mathbb{Z}$.
 \end{theorem}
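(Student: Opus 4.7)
The plan is to derive this as a corollary of the Hales--Jewett theorem, which is designed for exactly this sort of finitary coloring problem over an arbitrary alphabet. Enumerate $\mathcal{S} = \{s_1, \ldots, s_k\}$, fix an $r$-coloring $\chi$ of $\mathbb{Z}^n$, and let $N = \mathrm{HJ}(k, r)$ be the corresponding Hales--Jewett number, so that every $r$-coloring of the combinatorial cube $\mathcal{S}^N$ contains a monochromatic combinatorial line.

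Next I would introduce the ``sum map'' $\phi : \mathcal{S}^N \to \mathbb{Z}^n$ defined by $\phi(t_1, \ldots, t_N) = t_1 + \cdots + t_N$ and pull $\chi$ back to a coloring $\chi' := \chi \circ \phi$ of $\mathcal{S}^N$. Applying Hales--Jewett to $\chi'$ yields a monochromatic combinatorial line $L$, parametrized by $s \in \mathcal{S}$ as a tuple $\mathbf{t}(s)$ in which some nonempty set of $m \ge 1$ ``moving'' coordinates all take value $s$ and the remaining $N-m$ coordinates are fixed elements of $\mathcal{S}$. Letting $\mathbf{a}$ denote the sum of those fixed coordinates, the image under $\phi$ is
\[
\phi(L) \;=\; \{\mathbf{a} + m s : s \in \mathcal{S}\} \;=\; \mathbf{a} + m\mathcal{S},
\]
which is exactly a homothetic copy of $\mathcal{S}$; it is monochromatic under $\chi$ because $L$ is monochromatic under $\chi'$.

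The one point that needs checking is $m \ge 1$, so that the scale factor of the homothety is a genuine positive integer and $\phi(L)$ is nondegenerate rather than a single repeated point; but this is built into the definition of a combinatorial line, which must contain at least one wildcard coordinate. The substantive combinatorial content of the theorem is therefore entirely concentrated in Hales--Jewett itself, and the argument above is essentially a packaging trick. I would cite Hales--Jewett as a black box rather than reprove it; the main obstacle, if one insists on a self-contained treatment, is thus the proof of Hales--Jewett, for which one can quote Shelah's primitive-recursive proof or the original Hales--Jewett induction.
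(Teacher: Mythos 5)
Your derivation is correct: pulling the coloring back along the sum map $\phi(t_1,\ldots,t_N)=t_1+\cdots+t_N$, applying Hales--Jewett to $\mathcal{S}^N$ with $N=\mathrm{HJ}(|\mathcal{S}|,r)$, and observing that a monochromatic combinatorial line maps onto $\mathbf{a}+m\mathcal{S}$ with $m\geq 1$ (the wildcard set is nonempty) gives exactly the required nondegenerate monochromatic homothetic copy. The paper does not reprove this theorem---it cites Rado for the first published proof and points to precisely this Hales--Jewett derivation in Graham, Rothschild, and Spencer---so your argument is the standard route the paper itself references.
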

%\begin{theorem} \label{G}
%  Let $\mathcal{S}$ be any finite set of points in $\mathbb{Z}^2$.  If the points of $\mathbb{Z}^2$ are colored in finitely many colors, then there exists a monochromatic subset of $\mathbb{Z}^2$ homothetic  to $\mathcal{S}$.
%\end{theorem}
Gallai himself did not publish the result; a proof  first appeared
in the literature in \cite{Rado}.  (For an interesting history, see
Chapter 42 of Soifer's book \cite{Soifer}.) A derivation of Theorem
\ref{GZ} from the Hales-Jewett Theorem  can be found in
\cite{GrahamRT}. A Euclidean-space-version of Gallai's Theorem is as
follows:

 \begin{theorem}[Gallai's Theorem on $\mathbb{E}^n$] \label{GE}
   Let $\mathcal{S}$ be any finite subset of $\mathbb{E}^n$.  Then any finite coloring of $\mathbb{E}^n$ contains a monochromatic subset homothetic to $\mathcal{S}$, i.e. a monochromatic subset of the form $\mathbf{a}+b\mathcal{S}$ for $\mathbf{a}\in\mathbb{E}^n$ and $b\in\mathbb{R}$.  \end{theorem}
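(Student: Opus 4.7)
The plan is to deduce Theorem \ref{GE} directly from Theorem \ref{GZ} by a transfer argument: pull a finite coloring of $\mathbb{E}^n$ back along a linear map $\mathbb{Z}^k \to \mathbb{E}^n$ chosen so that the standard basis is sent to $\mathcal{S}$, apply the already-established discrete theorem in $\mathbb{Z}^k$, and push the resulting monochromatic homothetic copy forward.

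Concretely, write $\mathcal{S} = \{s_1, \ldots, s_k\} \subset \mathbb{E}^n$ and fix a finite coloring $\chi$ of $\mathbb{E}^n$. Let $e_1, \ldots, e_k$ denote the standard basis of $\mathbb{Z}^k$ and define $\phi : \mathbb{Z}^k \to \mathbb{E}^n$ by $\phi(x_1, \ldots, x_k) = \sum_{i=1}^{k} x_i s_i$, so that $\phi(e_i) = s_i$ for each $i$. Setting $\mathcal{T} = \{e_1, \ldots, e_k\}$, the pulled-back coloring $\chi' := \chi \circ \phi$ is a finite coloring of $\mathbb{Z}^k$. By Theorem \ref{GZ} applied to $\mathcal{T}$ and $\chi'$, there exist $\mathbf{a} \in \mathbb{Z}^k$ and $b \in \mathbb{Z}$ (with $b \neq 0$, since otherwise the conclusion is trivial) such that $\mathbf{a} + b\mathcal{T}$ is $\chi'$-monochromatic. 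By linearity of $\phi$, the image $\phi(\mathbf{a}) + b\mathcal{S} = \phi(\mathbf{a} + b\mathcal{T})$ is then $\chi$-monochromatic in $\mathbb{E}^n$, and since $\phi(\mathbf{a}) \in \mathbb{E}^n$ and $b \in \mathbb{Z} \subset \mathbb{R}$, this is the desired homothetic copy of $\mathcal{S}$.

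There is no genuine obstacle once the source dimension is chosen correctly. The only subtlety worth flagging is that one should take $k = |\mathcal{S}|$ rather than $k = n$: with this choice the map $\phi$ automatically sends $\mathcal{T}$ bijectively to $\mathcal{S}$ for every $b \neq 0$, irrespective of any linear dependencies or rational incommensurabilities among the $s_i$. Working instead from $\mathbb{Z}^n$ would require the $s_i$ to lie in some lattice, which they need not. With the dimension fixed this way, the argument is purely formal and reduces the continuous statement to its discrete counterpart.
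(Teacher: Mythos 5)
Your proposal is correct and follows essentially the same route the paper intends: the paper does not prove Theorem \ref{GE} explicitly but remarks at the end that it follows from Theorem \ref{GZ} by the same linear-transformation device used in its main proofs, namely pulling the coloring back along the linear map sending the standard basis (there, of $\mathbb{E}^{|\mathcal{S}|-1}$ after translating $\mathcal{S}$ to contain the origin) onto $\mathcal{S}$ and pushing the discrete monochromatic copy forward. Your choice of $k=|\mathcal{S}|$ without the origin normalization is only a cosmetic variant of that argument.
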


   These two theorems are fundamental in   Euclidean Ramsey theory. It seems natural, however, to suspect that Theorem \ref{GE} can be strengthened.  Surely, we can find more than \emph{one} monochromatic homothetic copy, no? In this paper, we generalize  Theorem \ref{GE} by showing that there must in fact be uncountably many monochromatic homothetic copies.

   To the best of our knowledge, the first uncountability result of this type appeared in \cite{BDP}:

   \begin{theorem} \label{BDP}
     For every 2-coloring of $\mathbb{E}^2$, there exist uncountably many
$r \in \mathbb{R}^+$ such that there exists a monochromatic
equilateral triangle of side length $r$.
   \end{theorem}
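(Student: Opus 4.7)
My plan is to proceed by contradiction. Assume that
\[
  S \;=\; \{\, r>0 : \text{some equilateral triangle of side } r \text{ is monochromatic}\,\}
\]
is countable, and aim to establish a ``forbidden triple'' lemma: for every $d>0$, at least one of $d$, $2d$, or $d\sqrt{3}$ lies in $S$. Granting this lemma, every positive real belongs to $S \cup \tfrac{1}{2}S \cup \tfrac{1}{\sqrt{3}} S$, which is countable when $S$ is, contradicting the uncountability of $\mathbb{R}^+$.

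To prove the lemma, I would suppose to the contrary that $d$, $2d$, and $d\sqrt{3}$ are all outside $S$, and then propagate forced colors through a small explicit configuration. Pigeonhole applied to any equilateral triangle of side $d$ produces a monochromatic pair at distance $d$, which I may take to be red points $p=(0,0)$ and $q=(d,0)$. The hypothesis $d\notin S$ forces the two apex points $(d/2,\pm d\sqrt{3}/2)$ of the equilateral triangles on $pq$ to be blue. The hypothesis $d\sqrt{3}\notin S$ applied to that blue pair forces the further apex points $(-d,0)$ and $(2d,0)$ to be red, yielding four collinear red points at $x=-d,0,d,2d$. Reapplying $d\notin S$ to each adjacent red pair and $2d\notin S$ to the pair at distance $2d$ then forces a cluster of blue points including $(\pm d/2,\pm d\sqrt{3}/2)$, $(3d/2,\pm d\sqrt{3}/2)$, and $(0,\pm d\sqrt{3})$.

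The main obstacle I anticipate is the final step: locating within this forced blue cluster an equilateral triangle of side $d$, which would contradict $d\notin S$. A natural candidate is the triple $(-d/2,\, d\sqrt{3}/2)$, $(d/2,\, d\sqrt{3}/2)$, $(0,\, d\sqrt{3})$, whose pairwise distances should each reduce to $d$. If that specific triple failed to close the argument, one could widen the forbidden set beyond $\{d,2d,d\sqrt{3}\}$ and propagate further, since countability versus the continuum leaves ample cardinality slack.
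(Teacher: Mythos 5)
Your argument is correct, and the step you flagged as the main obstacle does close: the triple $(-d/2,\,d\sqrt{3}/2)$, $(d/2,\,d\sqrt{3}/2)$, $(0,\,d\sqrt{3})$ is indeed equilateral of side $d$, and each vertex is forced blue --- the first by the red pair $(-d,0),(0,0)$ together with $d\notin S$, the second by the original red pair $(0,0),(d,0)$, and the third by the red pair $(-d,0),(d,0)$ at distance $2d$ together with $2d\notin S$ --- contradicting $d\notin S$. So the forbidden-triple lemma holds, and it actually yields a bit more than you state: since $\mathbb{R}^+= S\cup\tfrac12 S\cup\tfrac{1}{\sqrt{3}}S$, the set $S$ must have cardinality $2^{\aleph_0}$, no contradiction argument needed. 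Be aware, however, that the paper contains no proof of this statement: it is quoted from \cite{BDP}, and inside the paper it is subsumed by the stronger Theorems \ref{3pt}, \ref{Main} and \ref{big}, which are proved by an entirely different mechanism --- pulling the coloring back to $\mathbb{E}^{n-1}$ along the linear map determined by the configuration and applying Gallai's Theorem on $\mathbb{Z}^{n-1}$ coset by coset. Your elementary color-propagation argument is short, self-contained, and avoids all of that machinery, but it is tailored to two colors and to equilateral triangles; the paper's method works for any finite number of colors and any finite configuration, and (via Theorem \ref{big}) produces $2^{\aleph_0}$ distinct dilation factors, each realized by $2^{\aleph_0}$ pairwise-disjoint monochromatic copies, which is strictly stronger than the uncountability of side lengths you obtain.
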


   The author's first attempt to strengthen Gallai's Theorem resulted in the following two theorems  from \cite{Alm2012G}, the second of which strengthens Theorem \ref{BDP}:

   \begin{theorem}
    Let $\mathcal{S}$ be a finite configuration of points in the integer lattice $\mathbb{Z}^2$.  In any finite coloring of the plane $\mathbb{E}^2$, there exist $2^{\aleph_0}$ monochromatic homothetic copies of $\mathcal{S}$.
 \end{theorem}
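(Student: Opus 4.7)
The plan is to bootstrap from Theorem \ref{GZ} (Gallai's Theorem on $\mathbb{Z}^n$) by scaling the coloring in one real parameter and showing that uncountably many distinct scale factors are forced. Assume without loss of generality that $|\mathcal{S}|\geq 2$, since otherwise any point of $\mathbb{E}^2$ is trivially a homothetic copy. Fix a finite coloring $\chi\colon \mathbb{E}^2\to[k]$.

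For each real $t>0$, define a pulled-back coloring $\chi_t\colon\mathbb{Z}^2\to[k]$ by $\chi_t(\mathbf{v})=\chi(t\mathbf{v})$. Since $\mathcal{S}\subseteq\mathbb{Z}^2$ is finite, Theorem \ref{GZ} applied to $\chi_t$ yields $\mathbf{a}_t\in\mathbb{Z}^2$ and $b_t\in\mathbb{Z}\setminus\{0\}$ such that $\mathbf{a}_t+b_t\mathcal{S}$ is $\chi_t$-monochromatic. Unwinding the definition of $\chi_t$, the set $t\mathbf{a}_t+(tb_t)\mathcal{S}\subseteq\mathbb{E}^2$ is $\chi$-monochromatic, and it is a genuine homothetic copy of $\mathcal{S}$ because $tb_t\neq 0$.

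The heart of the argument is showing that as $t$ varies over $\mathbb{R}^+$, the set of scale factors $T=\{\,tb_t : t>0\,\}$ has cardinality $2^{\aleph_0}$. Suppose for contradiction that $T$ is countable. Then for each $s\in T$ the fiber $\{t>0 : tb_t=s\}$ is contained in $\{s/n : n\in\mathbb{Z}\setminus\{0\}\}$, which is countable. But $\mathbb{R}^+$ is the union of these fibers over $s\in T$, giving $\mathbb{R}^+$ as a countable union of countable sets, a contradiction. Hence $|T|=2^{\aleph_0}$.

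Finally, two homothetic copies $\mathbf{a}+b\mathcal{S}$ and $\mathbf{a}'+b'\mathcal{S}$ with $|b|\neq|b'|$ are distinct (they have different diameters, since $\mathcal{S}$ has at least two points), so the $2^{\aleph_0}$ distinct absolute scale factors in $T$ yield $2^{\aleph_0}$ distinct monochromatic homothetic copies of $\mathcal{S}$. The main obstacle I anticipate is the book-keeping in the cardinality step: one must be careful that the choice function $t\mapsto(\mathbf{a}_t,b_t)$ (which need not be canonical) does not accidentally collapse the range of $tb_t$, but the fiber argument above sidesteps this by treating $t\mapsto tb_t$ as an arbitrary function from $\mathbb{R}^+$ into $\bigcup_n n^{-1}\mathbb{Z}$-type rationals of $t$, which cannot have countable image.
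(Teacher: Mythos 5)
Your proof is essentially correct, but it takes a genuinely different route from the paper's. The paper's method (seen in the proof of Theorem \ref{notline}, and in the specialization to lattice configurations that this statement admits) keeps the dilation discrete and varies the \emph{translation}: it partitions space into the continuum many cosets $\mathbb{Z}^{2}+\mathbf{e}$ (or, for general $\mathcal{S}$, cosets of $\mathbb{Z}^{n-1}$ pulled through a surjective linear map $T$), applies Theorem \ref{GZ} once per coset, and gets distinct --- indeed, after a refinement by an equivalence relation, pairwise-disjoint --- copies because different cosets cannot share points. You instead keep the translation inside the lattice and vary the \emph{dilation}: for each $t>0$ you pull the coloring back to $\mathbb{Z}^2$ along $\mathbf{v}\mapsto t\mathbf{v}$, apply Theorem \ref{GZ}, and then show by a fiber-counting argument that the scale factors $tb_t$ realize continuum many values, since each value $s$ can only arise from the countably many $t\in\{s/n:n\in\mathbb{Z}^+\}$. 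Your route is shorter for lattice $\mathcal{S}$ (distinct dilation factors give distinct sets for free, so no disjointness bookkeeping is needed) and it even yields copies at $2^{\aleph_0}$ distinct sizes, anticipating the spirit of Theorem \ref{big}; what it does not give is pairwise disjointness, and it leans on $\mathcal{S}\subseteq\mathbb{Z}^2$, so it would need the paper's linear-map device to handle arbitrary finite $\mathcal{S}\subset\mathbb{E}^2$. Two small points of precision: you should use the standard nontrivial form of Gallai's Theorem, which supplies $b_t\in\mathbb{Z}^+$ (the paper's own proofs do exactly this with $a\in\mathbb{Z}^+$), rather than merely $b_t\in\mathbb{Z}$; and deriving ``$T$ uncountable'' by contradiction does not by itself give $|T|=2^{\aleph_0}$ without the continuum hypothesis --- instead run your fiber decomposition as a cardinality estimate, $2^{\aleph_0}=|\mathbb{R}^+|\le |T|\cdot\aleph_0$, which forces $|T|=2^{\aleph_0}$ directly.
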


   \begin{theorem} \label{3pt}
   For any finite coloring of $\mathbb{E}^2$ there exist $2^{\aleph_0}$  monochromatic equilateral triangles, i.e., subsets homothetic to $\{(0,0), (1,0), (1/2,\sqrt{3}/2)\}$.
 \end{theorem}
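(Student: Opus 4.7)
The plan is to show that the set
\[
A = \{r > 0 : \text{some equilateral triangle of side } r \text{ in } \mathbb{E}^2 \text{ is monochromatic under } \chi\}
\]
has cardinality $2^{\aleph_0}$; since equilateral triangles with distinct side lengths are distinct, this immediately produces $2^{\aleph_0}$ monochromatic equilateral triangles by selecting one triangle of side $r$ for each $r \in A$.

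The central step is to prove $A \cdot A^{-1} = \mathbb{R}^+$, where $A \cdot A^{-1} := \{a/b : a, b \in A\}$. Given any $s > 0$, I would form the six-point configuration $\mathcal{S}_s$ consisting of the vertices of two disjoint equilateral triangles, one of side $1$ and one of side $s$, placed far enough apart that they remain disjoint. Applying Theorem \ref{GE} to $\mathcal{S}_s$ yields a monochromatic homothetic image $\mathbf{a} + b\mathcal{S}_s$ with $b \neq 0$. Since homotheties preserve the disjoint-triangle structure, this image consists of two monochromatic equilateral triangles of sides $|b|$ and $|b|s$ sharing a common color, so $|b|, |b|s \in A$ and therefore $s = (|b|s)/|b| \in A \cdot A^{-1}$. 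Since $s > 0$ was arbitrary, $A \cdot A^{-1} = \mathbb{R}^+$.

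The cardinality conclusion is then elementary. The surjection $(a, b) \mapsto a/b$ from $A \times A$ onto $A \cdot A^{-1}$ gives $|A \cdot A^{-1}| \le |A|^2$. If $A$ were finite, then $A \cdot A^{-1}$ would be finite, contradicting $A \cdot A^{-1} = \mathbb{R}^+$, so $A$ is infinite; standard cardinal arithmetic then gives $|A|^2 = |A|$, whence $|A| \ge 2^{\aleph_0}$, and since $A \subseteq \mathbb{R}^+$ this forces $|A| = 2^{\aleph_0}$.

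The only real conceptual hurdle is choosing the right configuration to feed into Gallai's Theorem: a single triangle yields only one mono copy, and even a large triangular lattice yields only finitely many triangles per application, far short of $2^{\aleph_0}$. The key idea is that applying Gallai to two disjoint scaled copies of the triangle already couples two different side-length scales into a single color class, and ranging $s$ over all of $\mathbb{R}^+$ together with a one-line cardinal arithmetic computation finishes the argument.
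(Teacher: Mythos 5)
Your argument is correct, and it is a genuinely different route from the one taken in this paper. Here the three-point case is subsumed by the proof of Theorem \ref{notline}: the coloring of $\mathbb{E}^2$ is pulled back along a linear surjection $T:\mathbb{E}^{n-1}\to\mathbb{E}^2$, the space $\mathbb{E}^{n-1}$ is split into the $2^{\aleph_0}$ cosets of $\mathbb{Z}^{n-1}$, Gallai's Theorem on $\mathbb{Z}^{n-1}$ is applied inside each coset, and a pigeonhole on the pair $(a_0,\textbf{d}_0)$ together with an equivalence-relation argument extracts continuum many \emph{pairwise-disjoint} monochromatic copies, all with the same (integer) dilation factor. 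You instead apply Gallai's Theorem on $\mathbb{E}^2$ once per scale $s$ to the two-scale configuration $\mathcal{S}_s$ (two homothetic copies of the triangle, at ratios $1$ and $s$), deduce that the set $A$ of achievable monochromatic side ratios satisfies $A\cdot A^{-1}=\mathbb{R}^+$, and finish with cardinal arithmetic. Your method is shorter and yields continuum many \emph{distinct dilation factors} (in the spirit of the first half of Theorem \ref{big}, and it generalizes verbatim to any finite $\mathcal{S}\subset\mathbb{E}^k$ by taking $\mathcal{S}\cup(s\mathcal{S}+\textbf{t})$); what it does not give is disjointness, nor many copies at a single fixed dilation factor, which is exactly what the coset construction buys and what Theorem \ref{Main} requires --- but Theorem \ref{3pt} as stated asks for neither, so your proof suffices. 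Two small points to make explicit: Gallai's Theorem on $\mathbb{E}^2$ must be used in its nondegenerate form $b\neq 0$ (this is automatic if, as in this paper, it is derived from the $\mathbb{Z}^n$ version, which gives $b\in\mathbb{Z}^+$), and the two triangles in $\mathcal{S}_s$ should themselves be taken homothetic to the reference triangle $\{(0,0),(1,0),(1/2,\sqrt{3}/2)\}$, so that their images under $\textbf{x}\mapsto\mathbf{a}+b\textbf{x}$ are again homothets of that reference set rather than rotated equilateral triangles.
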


 In fact, the argument given in \cite{Alm2012G} for Theorem \ref{3pt} will work for any 3-point configuration in $\mathbb{E}^2$. The problem for an arbitrary finite subset $\mathcal{S}\subset\mathbb{E}^2$ was left open.  The main result of the present paper is a solution to that problem:

 \begin{theorem} \label{Main}
  Let $\mathcal{S}$ be any finite set of points in $\mathbb{E}^2$.  If the points of $\mathbb{E}^2$ are colored in finitely many colors, then there exist $2^{\aleph_0}$ pairwise-disjoint monochromatic subsets of $\mathbb{E}^2$ homothetic to $\mathcal{S}$.
\end{theorem}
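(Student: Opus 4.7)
My plan is a transfinite induction of length $2^{\aleph_0}$, whose recursion step rests on an abundance lemma that I would prove by lifting to a higher-dimensional lattice configuration.

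\emph{Induction scheme.} Suppose that at stage $\alpha<2^{\aleph_0}$ pairwise-disjoint monochromatic homothetic copies $\{M_\beta:\beta<\alpha\}$ of $\mathcal{S}$ have been constructed. The union $X_\alpha:=\bigcup_{\beta<\alpha}M_\beta$ has cardinality at most $|\alpha|\cdot|\mathcal{S}|<2^{\aleph_0}$, so to continue the induction it suffices to find a further monochromatic homothetic copy of $\mathcal{S}$ disjoint from $X_\alpha$.

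\emph{Abundance lemma and lifting.} The technical heart is: for any $X\subset\mathbb{E}^2$ with $|X|<2^{\aleph_0}$, some monochromatic homothetic copy of $\mathcal{S}$ avoids $X$. Let $V\subset\mathbb{R}$ be the $\mathbb{Q}$-subspace spanned by the coordinates of the points of $\mathcal{S}$, with $d:=\dim_\mathbb{Q}V<\infty$. Fixing a $\mathbb{Q}$-basis and clearing denominators realises $\mathcal{S}$ (up to a global scalar) as the image under an $\mathbb{R}$-linear surjection $\phi:\mathbb{R}^{2d}\to\mathbb{R}^2$ of a lattice configuration $\mathcal{S}^\ast\subset\mathbb{Z}^{2d}$. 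The pull-back $\tilde\chi:=\chi\circ\phi$ is a finite coloring of $\mathbb{E}^{2d}$, and the $n$-dimensional strengthening of the main theorem (the ``even stronger result'' referenced in the abstract, with $n=2d$) supplies $2^{\aleph_0}$ monochromatic homothetic copies of $\mathcal{S}^\ast$ under $\tilde\chi$, each of which projects under $\phi$ to a monochromatic homothetic copy of $\mathcal{S}$ in $\chi$. At any fixed scaling factor $b$, at most $|X|\cdot|\mathcal{S}|<2^{\aleph_0}$ translations place a copy of $b\mathcal{S}$ into $X$, so a copy avoiding $X$ can be selected.

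\textbf{Main obstacle.} The crux is that $\phi$ has fibers of positive dimension $2d-2$, so copies disjoint in $\mathbb{E}^{2d}$ may project to overlapping copies in $\mathbb{E}^2$. Thus one must deploy the $n$-dimensional theorem in a form that delivers not merely $2^{\aleph_0}$ upstairs copies but $2^{\aleph_0}$ whose $\phi$-projections are truly distinct --- most cleanly, by producing $2^{\aleph_0}$ pairwise distinct scaling factors $b$, so that the cardinality count at the end of the previous paragraph applies to each scaling separately. Arranging this scaling-variety from the higher-dimensional result is the central technical challenge I expect; once it is in hand, the abundance lemma and the surrounding transfinite induction should go through routinely.
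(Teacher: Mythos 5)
There is a genuine gap, and it sits exactly at the point you defer. First, your proof of the abundance lemma is circular: the ``$n$-dimensional strengthening of the main theorem'' you invoke to get $2^{\aleph_0}$ monochromatic copies of $\mathcal{S}^\ast$ upstairs is Theorem \ref{noproof}/\ref{big} of this paper itself, not a prior result; the only ingredient actually available is Gallai's Theorem on $\mathbb{Z}^n$ (Theorem \ref{GZ}), which yields a \emph{single} monochromatic copy per coloring, and the passage from one copy to continuum many is precisely what has to be proved. Even setting circularity aside, the hypotheses of the higher-dimensional theorem fail for your lifted configuration: it requires $|\mathcal{S}^\ast|>2d$ and that $\mathcal{S}^\ast$ not lie in a $(2d-1)$-dimensional hyperplane, whereas any $n$-point set with $n\le 2d$ automatically lies in such a hyperplane (e.g.\ a $3$-point $\mathcal{S}$ whose coordinates span a $3$-dimensional $\mathbb{Q}$-space gives $2d=6>3$). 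Second, the fiber problem you label ``the central technical challenge'' is not a detail to be arranged later --- it is the whole content of the theorem --- and your proposed fallback of $2^{\aleph_0}$ pairwise distinct scaling factors would not rescue the counting: with only one copy per scale, all of those copies could pass through a single point of $X$, so the bound $|X|\cdot|\mathcal{S}|$ ``per fixed scale'' gives nothing. What your cardinality argument actually needs is $2^{\aleph_0}$ distinct \emph{translates at one fixed scale}.

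For comparison, the paper produces exactly that, and in fact gets pairwise disjointness outright with no transfinite induction: it lifts via the linear map $T:\mathbb{E}^{n-1}\to\mathbb{E}^2$ sending the standard basis to the points of $\mathcal{S}$, applies Gallai on $\mathbb{Z}^{n-1}$ separately to each of the $2^{\aleph_0}$ cosets of $\mathbb{Z}^{n-1}$ in $\mathbb{E}^{n-1}$, pigeonholes over the countably many pairs $(a,\mathbf{d})$ so that continuum many cosets share the same dilation $a_0$ and lattice translate $\mathbf{d}_0$, and restricts to coset representatives $\mathbf{e}$ whose images $T(\mathbf{e})$ are pairwise inequivalent modulo the countable set $\{m(\mathbf{y}_i-\mathbf{y}_j): m\in\mathbb{Z}\}$; any intersection of two resulting copies would force such an equivalence, so the copies are disjoint (the collinear case is handled by the same scheme on $\mathbb{R}$). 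Your transfinite-induction frame plus the fixed-scale counting is sound in principle, but without an independent argument of this coset-plus-pigeonhole type the abundance lemma is unproved, so the proposal as written does not constitute a proof.
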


\subsection{Notation}

For a positive integer $m$, we let $[m]$ denote
$\{1,2,3,\ldots,m\}$.  We let $\mathbb{E}^n$ denote $n$-dimensional
Euclidean space, and $\mathbb{R}$ the real line.  For $n>1$,
elements of $\mathbb{E}^n$ will be denoted with boldface, as in
$\textbf{y}$, and $\textbf{0}$ will denote the origin.  Subsets of
$\mathbb{R}$ and $\mathbb{E}^n$ will be denoted with capital script
(``mathcal") letters.  For $\mathcal{X}\subseteq\mathbb{E}^n$,
$a\in\mathbb{R}$, and $\textbf{d}\in\mathbb{E}^n$, let
$a\mathcal{X}+\textbf{d}=\{a\textbf{x}+\textbf{d}:\textbf{x}\in\mathcal{X}\}$.

Recall that for all $n\in\mathbb{Z}^+$, the cardinality of
$\mathbb{E}^n$ is $2^{\aleph_0}$.

\section{Main Results}

We will prove the following two results, whose conjunction is
equivalent to Theorem \ref{Main}.

\begin{theorem} \label{notline}
  Let $\mathcal{S}$ be any finite set of points in $\mathbb{E}^2$, not all on a line.  If the points of $\mathbb{E}^2$ are colored in finitely many colors, then there exist $2^{\aleph_0}$ pairwise-disjoint  monochromatic subsets of $\mathbb{E}^2$ homothetic to $\mathcal{S}$.
\end{theorem}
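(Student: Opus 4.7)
My strategy combines Gallai's Theorem (Theorem \ref{GE}) with a transfinite induction, mediated by an ``avoidance'' lemma that exploits the non-collinearity of $\mathcal{S}$.

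\textbf{Finite boosting.} First I would establish that for each positive integer $k$ there are $k$ pairwise-disjoint monochromatic homothetic copies of $\mathcal{S}$. Choose a $k$-point set $\mathcal{V}_k \subset \mathbb{E}^2$ whose pairwise distances exceed $\operatorname{diam}(\mathcal{S})$, and apply Theorem~\ref{GE} to the enlarged finite configuration $\mathcal{T}_k := \mathcal{S} + \mathcal{V}_k$ (Minkowski sum). A monochromatic homothetic copy $\mathbf{a} + b\mathcal{T}_k$ decomposes as the disjoint union $\bigcup_{\mathbf{v} \in \mathcal{V}_k}(\mathbf{a} + b\mathbf{v} + b\mathcal{S})$, producing $k$ pairwise-disjoint monochromatic homothetic copies of $\mathcal{S}$, all of the same color. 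Iterating across $k$ yields at most countably many copies for a fixed coloring, so the real work lies ahead.

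\textbf{Avoidance lemma and transfinite induction.} The central step would be an \emph{avoidance lemma}: for any finite coloring $\chi$ of $\mathbb{E}^2$ and any subset $V \subset \mathbb{E}^2$ with $|V| < 2^{\aleph_0}$, there exists a monochromatic homothetic copy of $\mathcal{S}$ disjoint from $V$. Given this, Theorem~\ref{notline} follows by a transfinite induction of length $2^{\aleph_0}$: at stage $\alpha$, apply the lemma with $V = \bigcup \mathcal{F}_\alpha$ (whose cardinality $|\alpha|\cdot|\mathcal{S}|$ is less than $2^{\aleph_0}$) and adjoin the new copy to the family. To prove the avoidance lemma, extend $\chi$ to an $(m+1)$-coloring $\chi'$ by assigning a new color $m+1$ to the points of $V$, and apply finite boosting to $\chi'$ with an appropriate $k$. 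If any of the $k$ resulting copies carries an old color from $[m]$, then that copy is disjoint from $V$ and still monochromatic in $\chi$, which is what we need.

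\textbf{Main obstacle.} The delicate case in the avoidance lemma is when all $k$ boosted copies have color $m+1$, i.e.\ lie entirely inside $V$. A naive counting argument fails because $|V|$ may be infinite (e.g.\ $\aleph_0 \le |V| < 2^{\aleph_0}$), and an infinite set can accommodate arbitrarily many homothetic copies of a finite configuration. Resolving this case is where the non-collinearity of $\mathcal{S}$ should be essential: because $\mathcal{S}$ (and hence $\mathcal{T}_k$) spans $\mathbb{E}^2$, the homothetic embeddings of $\mathcal{T}_k$ form a genuinely three-parameter family, and by varying $\mathcal{V}_k$ over a $2^{\aleph_0}$-sized family (either adaptively based on $V$, or generically, perhaps combined with a pigeonhole on color-patterns across scales), one should be able to force some boosted copy to land outside $V$. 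This is the crux distinguishing the present theorem from Theorem~\ref{3pt}, which handled only three-point configurations and presumably admits a more direct geometric argument special to triangles.
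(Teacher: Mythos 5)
Your reduction steps are sound: the ``finite boosting'' via the Minkowski sum $\mathcal{S}+\mathcal{V}_k$ does give $k$ pairwise-disjoint monochromatic copies, and the transfinite induction of length $2^{\aleph_0}$ would indeed deduce the theorem from the avoidance lemma (which, incidentally, is a true statement --- it follows from the theorem itself, since fewer than $2^{\aleph_0}$ points can meet fewer than $2^{\aleph_0}$ of any family of pairwise-disjoint copies). The genuine gap is exactly where you flag it: your proof of the avoidance lemma has no argument for the case in which every boosted copy receives the new color $m+1$, i.e.\ lies inside $V$. This is not a removable technicality. Even a countable $V$ can contain infinitely many homothetic copies of $\mathcal{T}_k$ for every $k$ and every admissible $\mathcal{V}_k$, so no cardinality or counting argument applies; and ``vary $\mathcal{V}_k$ over a $2^{\aleph_0}$-sized family, adaptively or generically, with a pigeonhole on color patterns'' names no mechanism that forces some boosted copy to carry an old color. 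The observation that homotheties form a three-parameter family is true but is not used in any way that constrains where the monochromatic copies guaranteed by Gallai's theorem must land. In effect the avoidance lemma concentrates the entire difficulty of the theorem, and the proposal leaves it unproved.

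For contrast, the paper avoids any avoidance lemma. It takes the linear map $T:\mathbb{E}^{n-1}\to\mathbb{E}^2$ whose matrix has the points $\mathbf{y}_1,\dots,\mathbf{y}_{n-1}$ of $\mathcal{S}$ as columns (surjective precisely because $\mathcal{S}$ is not collinear), pulls the coloring back to $\mathbb{E}^{n-1}$, and applies Gallai's theorem on $\mathbb{Z}^{n-1}$ separately inside each of the $2^{\aleph_0}$ cosets of $\mathbb{Z}^{n-1}$; each coset yields a monochromatic copy $a\mathcal{U}+\mathbf{d}+\mathbf{e}$ of the standard simplex, whose image is a monochromatic copy $a\mathcal{S}+T(\mathbf{d}+\mathbf{e})$ of $\mathcal{S}$. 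Disjointness is then obtained globally: pigeonhole the countably many parameter pairs $(a,\mathbf{d})$ to fix a single $(a_0,\mathbf{d}_0)$ used by $2^{\aleph_0}$ cosets, and select the translation points $\mathbf{p}=T(\mathbf{e})$ from distinct classes of the equivalence relation $\mathbf{p}\sim\mathbf{q}\iff\mathbf{p}-\mathbf{q}\in\{m(\mathbf{y}_i-\mathbf{y}_j)\}$, whose classes are countable; any intersection of two resulting copies would force $a_0(\mathbf{y}_i-\mathbf{y}_j)=\mathbf{q}-\mathbf{p}$, contradicting the selection. If you want to salvage your outline, you would need to prove the avoidance lemma by some comparably concrete device; as it stands, the crux is missing.
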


\begin{theorem} \label{line}
  Let $\mathcal{S}$ be any finite set of points in $\mathbb{R}$.  If the points of $\mathbb{R}$ are colored in finitely many colors, then there exist $2^{\aleph_0}$  pairwise-disjoint monochromatic subsets of $\mathbb{R}$ homothetic to $\mathcal{S}$.
\end{theorem}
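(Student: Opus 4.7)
The plan is to partition $\mathbb{R}$ into $2^{\aleph_0}$ pairwise-disjoint cosets of a suitable countable subgroup and to extract one monochromatic homothetic copy of $\mathcal{S}$ from inside each coset. Let $V$ denote the $\mathbb{Q}$-linear span of $\mathcal{S}$ in $\mathbb{R}$; it is a $\mathbb{Q}$-vector subspace of finite dimension $d \le |\mathcal{S}|$, hence a countable additive subgroup of $(\mathbb{R},+)$. Its cosets $C_\alpha := V + x_\alpha$ partition $\mathbb{R}$. Since $\mathbb{R}$ is a $\mathbb{Q}$-vector space of Hamel dimension $2^{\aleph_0}$ while $V$ has finite $\mathbb{Q}$-dimension, the quotient $\mathbb{R}/V$ is again a $\mathbb{Q}$-vector space of dimension $2^{\aleph_0}$, so the number of cosets is exactly $2^{\aleph_0}$.

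Next, for each coset $C_\alpha$ I would produce a $\chi$-monochromatic homothetic copy of $\mathcal{S}$ that lies entirely inside $C_\alpha$. The crucial point is that whenever $a \in C_\alpha$ and $b \in \mathbb{Q}^*$, one has $a + b\mathcal{S} \subseteq C_\alpha$, because $V$ is closed under multiplication by rationals and already contains $\mathcal{S}$. Choosing a $\mathbb{Q}$-linear isomorphism $V \cong \mathbb{Q}^d$, the translation bijection $v \mapsto v + x_\alpha$ identifies $C_\alpha$ with $\mathbb{Q}^d$; the restricted coloring $\chi|_{C_\alpha}$ then corresponds to a finite coloring of $\mathbb{Q}^d$, and $\mathcal{S}$ to a finite subset $\mathcal{S}' \subset \mathbb{Q}^d$. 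A routine denominator-clearing argument reduces Gallai on $\mathbb{Q}^d$ to Theorem~\ref{GZ} on $\mathbb{Z}^d$, yielding a monochromatic homothetic copy of $\mathcal{S}'$ in this coloring with scale in $\mathbb{Q}^*$; pulling it back gives the required copy inside $C_\alpha$.

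Because the cosets $C_\alpha$ are pairwise disjoint, the $2^{\aleph_0}$ copies so produced are automatically pairwise disjoint, and a final pigeonhole over the finitely many colors will produce, if desired, $2^{\aleph_0}$ of them of a single common color. I do not foresee a serious obstacle: the two items that call for a moment's care are the cardinality identity $|\mathbb{R}/V| = 2^{\aleph_0}$, which follows at once from the Hamel basis description of $\mathbb{R}$ over $\mathbb{Q}$, and the routine scaling reduction from $\mathbb{Z}^d$ to $\mathbb{Q}^d$ of Gallai's theorem.
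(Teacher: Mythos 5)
Your argument is correct, but it takes a genuinely different route from the paper's. The paper pulls the coloring back along the linear surjection $T:\mathbb{E}^{n-1}\to\mathbb{R}$, $T(\textbf{v})=A\textbf{v}$ with $A=[x_1x_2\cdots x_{n-1}]$, applies Gallai's Theorem on $\mathbb{Z}^{n-1}$ inside each coset of $\mathbb{Z}^{n-1}$ in $\mathbb{E}^{n-1}$, pushes the resulting copies of $\mathcal{U}$ forward by $T$, and then has to work for disjointness: it selects one representative per class of an equivalence relation (difference lying in $\mathcal{Y}=\{m(x_i-x_j)\}$) on an interval in the image of $[0,1)^{n-1}$, and pigeonholes over the countably many pairs $(a,\textbf{d})$ to fix a single dilation $a_0$ and translation $\textbf{d}_0$ before checking disjointness by hand. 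You instead work directly in $\mathbb{R}$: you partition it into the $2^{\aleph_0}$ cosets of the countable group $V=\operatorname{span}_{\mathbb{Q}}\mathcal{S}$, observe that any copy $a+b\mathcal{S}$ with $b\in\mathbb{Q}^*$ and $a\in C_\alpha$ stays inside $C_\alpha$, and extract one monochromatic copy per coset by transporting Gallai's Theorem from $\mathbb{Z}^d$ to $\mathbb{Q}^d\cong V$ via the (indeed routine) denominator-clearing step: clear denominators of $\mathcal{S}'$ and apply Theorem~\ref{GZ} to the coloring restricted to $\mathbb{Z}^d$. Disjointness is then automatic, which lets you skip the paper's representative-selection and pigeonhole arguments entirely, and your partition generalizes verbatim to $\mathbb{E}^k$, giving Theorem~\ref{noproof} as well. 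What the paper's method buys in exchange is control of the dilation factor: replacing $\mathbb{Z}^{n-1}$ by $(r\mathbb{Z})^{n-1}$ yields copies with dilation $ra_0$ and hence the stronger Theorem~\ref{big}, with $2^{\aleph_0}$ distinct dilation factors, whereas your construction produces only rational (hence countably many) dilation factors. Two minor remarks: the closing pigeonhole over colors is unnecessary, since the statement only requires each copy to be monochromatic rather than all copies sharing one color; and your Hamel-basis cardinality count uses the axiom of choice, but so does the paper's choice of equivalence-class representatives, so nothing extra is incurred.
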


\begin{proof}[Proof of Theorem \ref{notline}]
  Let $\mathcal{S}\subset\mathbb{E}^2$, with $|\mathcal{S}|=n$.  Assume without loss of generality that $\mathcal{S}$ contains the origin $\textbf{0}$, so that $\mathcal{S}=\{\textbf{0},\textbf{y}_1,\ldots,\textbf{y}_{n-1}\}$.  Let $A$ be a 2-by-$(n-1)$ matrix whose $j^{\text{th}}$ column is $\textbf{y}_j$ (considered as a 2-by-1 column vector):
  $$A:=[\textbf{y}_1\textbf{y}_2\cdots\textbf{y}_{n-1}].$$
  Let $T:\mathbb{E}^{n-1}\rightarrow\mathbb{E}^2$ be given by $T(\textbf{v})=A\textbf{v}$.  Note that since the points of $\mathcal{S}$ do not lie all on a line, the set $\{\textbf{y}_1,\ldots,\textbf{y}_{n-1}\}$ spans $\mathbb{E}^2$ and hence $T$ is surjective.

  Let $c\in\mathbb{Z}^+$, and let $\chi:\mathbb{E}^2\rightarrow[c]$ be a coloring of the plane with $c$ colors.  Define $\chi':\mathbb{E}^{n-1}\rightarrow[c]$ by $\chi'(\mathbf{v})=\chi(T(\mathbf{v}))$.  Let $\{\textbf{u}_1,\ldots, \textbf{u}_{n-1}\}$ be the standard basis for $\mathbb{E}^{n-1}$, and let $\mathcal{U}=\{\textbf{0},\textbf{u}_1,\ldots,\textbf{u}_{n-1}\}$.

  We must now partition $\mathbb{E}^{n-1}$ into $2^{\aleph_0}$ parts.  Consider the collection of cosets ${\mathbb{E}^{n-1}}/{\mathbb{Z}^{n-1}}$.  Each coset $\mathcal{C}\in {\mathbb{E}^{n-1}}/{\mathbb{Z}^{n-1}}$ is of the form $\mathbb{Z}^{n-1}+\textbf{e}$ for some $\textbf{e}\in[0,1)^{n-1}$.  Hence $\mathcal{C}$ is a translation of $\mathbb{Z}^{n-1}$.  It will prove useful later in the proof to index the cosets by translation vectors from $[0,1)^{n-1}$, as in $\mathcal{C}_\textbf{e}$.  Note that each $\textbf{e}\in[0,1)^{n-1}$ uniquely identifies a coset.

  Each coset $\mathcal{C}_\textbf{e}$ is colored by $\chi'$, so by Gallai's Theorem on $\mathbb{Z}^n$ we may conclude that there exists some monochromatic subset of $\mathcal{C}_\textbf{e}$ homothetic to $\mathcal{U}$.  This subset will have the form $a\mathcal{U}+\textbf{d}+\textbf{e}$, where $a\in\mathbb{Z}^+$ and $\textbf{d}\in\mathbb{Z}^{n-1}$.  Now define
  \begin{equation*}
  \mathcal{S}_\textbf{e}=T(a\mathcal{U}+\textbf{d}+\textbf{e})=\{T(\textbf{v}):\textbf{v}\in a\mathcal{U}+\textbf{d}+\textbf{e}\}.
  \end{equation*}
  It is clear that $\mathcal{S}_\textbf{e}$ is monochromatic (under $\chi$).  To see that $\mathcal{S}_\textbf{e}$ is homothetic to $\mathcal{S}$, we recall that $T$ was defined by $T(\textbf{v})=A\textbf{v}$, where the columns of $A$ were the $\textbf{y}_i$'s, so $T$ sends the unit basis vectors in $\mathcal{U}$ to the $\textbf{y}_i$'s in $\mathcal{S}$.  Hence $T(\mathcal{U})=\mathcal{S}$, and we have
  \begin{equation*}
    T(a\mathcal{U}+\textbf{d}+\textbf{e})=aT(\mathcal{U})+T(\textbf{d}+\textbf{e})=a\mathcal{S}+T(\textbf{d}+\textbf{e}).
  \end{equation*}
  Therefore $\mathcal{S}_\textbf{e}$ is homothetic to $\mathcal{S}$.  Since there are $2^{\aleph_0}$ cosets, we have $2^{\aleph_0}$ monochromatic homothetic copics of $\mathcal{S}$.

  It remains to show that, while many of these $\mathcal{S}_\textbf{e}$ may overlap, it is possible to find $2^{\aleph_0}$ of them that are pairwise-disjoint.

  Since $A$ has rank 2, the image under $T$ of $[0,1)^{n-1}$ contains a parallelogram $\mathcal{P}$.  Let $\mathcal{Y}=\{m(\textbf{y}_i-\textbf{y}_j):m\in\mathbb{Z},\,\, 1\leq i\leq j<n\}$, and define an equivalence relation $\sim$ on $\mathcal{P}$ as follows: $$\textbf{p}\sim \textbf{q} \text{ if and only if }\textbf{p}-\textbf{q}\in\mathcal{Y}.$$
  Let $\mathcal{P}_0$ be a subset of $\mathcal{P}$ formed by choosing exactly one representative from each equivalence class.  Since each equivalence class is at  most countable, $|\mathcal{P}_0|=2^{\aleph_0}$.  For each $\textbf{p}\in\mathcal{P}_0$, choose $\textbf{e}_\textbf{p}\in[0,1)^{n-1}$ such that $T(\textbf{e}_\textbf{p})=\textbf{p}$, and let $\mathcal{E}=\{\textbf{e}_\textbf{p}:\textbf{p}\in\mathcal{P}_0\}$.  Note that $T$ is 1-1 on $\mathcal{E}$, and that $|\mathcal{E}|=|\mathcal{P}_0|=2^{\aleph_0}$.

  Now for each $\textbf{e}_\textbf{p}\in\mathcal{E}$ there is a coset $\mathcal{C}_{\textbf{e}_\textbf{p}}$ that contains a monochromatic set homothetic to $\mathcal{U}$ of the form $a\mathcal{U}+\textbf{d}+\textbf{e}_\textbf{p}$, where $a\in\mathbb{Z}^+$ and $\textbf{d}\in\mathbb{Z}^{n-1}$.  Since $\mathbb{Z}^+$ and $\mathbb{Z}^{n-1}$ are countable, there must be some $a_0\in\mathbb{Z}^+$ and $\textbf{d}_0\in\mathbb{Z}^{n-1}$ such that for $2^{\aleph_0}$ of the cosets $\mathcal{C}_{\textbf{e}_\textbf{p}}$, the monochromatic subset of $\mathcal{C}_{\textbf{e}_\textbf{p}}$ homothetic to $\mathcal{U}$ has the form $a_0\mathcal{U}+\textbf{d}_0+\textbf{e}_\textbf{p}$.

  Let $\mathcal{E}'$ be the subset of $\mathcal{E}$ consisting of all $\textbf{e}_\textbf{p}$ such that $a_0\mathcal{U}+\textbf{d}_0+\textbf{e}_\textbf{p}$ is monochromatic; note that $|\mathcal{E}'|=2^{\aleph_0}$.  We show that for any two distinct $\textbf{e}_\textbf{p}$, $\textbf{e}_\textbf{q}\in\mathcal{E}'$, $\mathcal{S}_{\textbf{e}_\textbf{p}}$ and $\mathcal{S}_{\textbf{e}_\textbf{q}}$ are disjoint.

  Now
  \begin{equation*}
    \mathcal{S}_{\textbf{e}_\textbf{p}}=T(a_0\mathcal{U}+\textbf{d}_0+\textbf{e}_\textbf{p})=a_0\mathcal{S}+T(\textbf{d}_0)+
    \textbf{p}
  \end{equation*}
  and
  \begin{equation*}
    \mathcal{S}_{\textbf{e}_\textbf{q}}=T(a_0\mathcal{U}+\textbf{d}_0+\textbf{e}_\textbf{q})=a_0\mathcal{S}+T(\textbf{d}_0)+\textbf{q}.
  \end{equation*}
  Suppose there is some point  common to both sets.  Then there are $\textbf{y}_i$, $\textbf{y}_j\in\mathcal{S}$ such that
  \begin{equation*}
    a_0\textbf{y}_i+T(\textbf{d}_0)+\textbf{p}=a_0\textbf{y}_j+T(\textbf{d}_0)+\textbf{q}.
  \end{equation*}
  Rearranging yields
  \begin{equation*}
    a_0(\textbf{y}_i-\textbf{y}_j)=\textbf{q}-\textbf{p}.
  \end{equation*}
  Hence $\textbf{p}\sim\textbf{q}$, a contradiction.

  Therefore the sets $\mathcal{S}_{\textbf{e}_\textbf{p}}$, for $\textbf{e}_\textbf{p}\in\mathcal{E}'$, are pairwise disjoint.  Since $|\mathcal{E}'|=2^{\aleph_0}$, the proof is complete.
\end{proof}

The next proof will run  in parallel to the previous one.

%%%%%%%%%%%%%%%%%%%%%%%%%%%%%%%%%%%%%%%%%%%%%%%%%%%%%%%%%%%%%%%%%%%%%%%%%%%%%%%%%%%%%%%%%%%%%%%%%%%%%%%%%%%%%%%%%%%%%%%%

\begin{proof}[Proof of Theorem \ref{line}]

Let $\mathcal{S}\subset\mathbb{R}$, with $|\mathcal{S}|=n$.  Assume
without loss of generality that 0 is the smallest number in
$\mathcal{S}$, so that $\mathcal{S}$ contains $0<x_1<x_2<\ldots
<x_{n-1}$.  Let $A$ be a 1-by-$(n-1)$ row vector
$$A:=[x_1x_2\cdots x_{n-1}].$$
Let $T:\mathbb{E}^{n-1}\rightarrow \mathbb{R}$ be given by
$T(\textbf{v})=A\textbf{v}$.

Let $c\in\mathbb{Z}^+$, and let $\chi:\mathbb{R}\rightarrow[c]$ be a
coloring of the real line with $c$ colors.  Define
$\chi':\mathbb{E}^{n-1}\rightarrow[c]$ by
$\chi'(\textbf{v})=\chi(T(\textbf{v}))$.  Let
$\{\textbf{u}_1,\ldots,\textbf{u}_{n-1}\}$ be the standard basis for
$\mathbb{E}^{n-1}$, and let
$\mathcal{U}=\{\textbf{0},\textbf{u}_1,\ldots,\textbf{u}_{n-1}\}$.

As in the proof of Theorem \ref{notline}, we have for each
$\textbf{e}\in[0,1)^{n-1}$ a monochromatic subset of
$a\mathcal{U}+\textbf{d}+\textbf{e}$ and its image
$\mathcal{S}_\textbf{e}$ under $T$.  The image of $[0,1)^{n-1}$
under $T$ contains an interval $(a,b)$.  Let
$\mathcal{Y}=\{m(x_i-x_j):m\in\mathbb{Z},\; 1\leq i\leq j<n\}$, and
define an equivalence relation $\sim$ on $(a,b)$ as follows:
$$p\sim q\text{ if and only if }p-q\in\mathcal{Y}.$$
Let $(a,b)^\star$ be a subset of $(a,b)$ containing exactly one
representative from each equivalence class.  Define $\textbf{e}_p$,
$\mathcal{E}$, and $\mathcal{E}'$ as in the proof  of  Theorem
\ref{notline}.  Then by the same argument given at the end of that
proof, we have that for distinct $\textbf{e}_p$ and
$\textbf{e}_q\in\mathcal{E}'$, $\mathcal{S}_{\textbf{e}_p}$ and
$\mathcal{S}_{\textbf{e}_q}$ are disjoint.

\end{proof}

It is not hard to see how to extend this argument to $k$-dimensional
Euclidean space.  We have the following:

\begin{theorem} \label{noproof}
  Let $n$, $k\in\mathbb{Z}^+$, with $n>k$.  Let $\mathcal{S}$ be an $n$-element subset of $\mathbb{E}^k$, whose points are not all contained in any $(k-1)$-dimensional hyperplane.  If the points of $\mathbb{E}^k$ are colored in finitely many colors, then there exist $2^{\aleph_0}$ pairwise-disjoint  monochromatic subsets homothetic to $\mathcal{S}$.
\end{theorem}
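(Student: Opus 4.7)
The plan is to run the proof of Theorem \ref{notline} almost verbatim with $k$ in place of $2$. Without loss of generality, assume $\mathbf{0} \in \mathcal{S}$, so that $\mathcal{S} = \{\mathbf{0}, \mathbf{y}_1, \ldots, \mathbf{y}_{n-1}\}$ in $\mathbb{E}^k$. The hypothesis that the points of $\mathcal{S}$ do not all lie in a $(k-1)$-dimensional hyperplane then amounts to saying that $\{\mathbf{y}_1, \ldots, \mathbf{y}_{n-1}\}$ spans $\mathbb{E}^k$. Form the $k \times (n-1)$ matrix $A$ whose columns are the $\mathbf{y}_j$, and let $T : \mathbb{E}^{n-1} \to \mathbb{E}^k$ be $T(\mathbf{v}) = A\mathbf{v}$, a surjective linear map satisfying $T(\mathcal{U}) = \mathcal{S}$, where $\mathcal{U} = \{\mathbf{0}, \mathbf{u}_1, \ldots, \mathbf{u}_{n-1}\}$.

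Given a finite coloring $\chi$ of $\mathbb{E}^k$, pulling back by $T$ colors $\mathbb{E}^{n-1}$, and applying Theorem \ref{GZ} to each coset $\mathbb{Z}^{n-1} + \mathbf{e}$ (for $\mathbf{e} \in [0,1)^{n-1}$) yields, for each $\mathbf{e}$, a monochromatic set $a\mathcal{U} + \mathbf{d} + \mathbf{e}$ whose image $\mathcal{S}_\mathbf{e} = a\mathcal{S} + T(\mathbf{d} + \mathbf{e})$ is $\chi$-monochromatic and homothetic to $\mathcal{S}$. To extract $2^{\aleph_0}$ pairwise-disjoint such copies, I would imitate the second half of the proof of Theorem \ref{notline}: observe that $T((0,1)^{n-1})$ is an open subset of $\mathbb{E}^k$ (since $T$ is a surjective linear map between finite-dimensional spaces), and so contains a $k$-dimensional parallelepiped $\mathcal{P}$. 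Define the countable set $\mathcal{Y} = \{m(\mathbf{y}_i - \mathbf{y}_j) : m \in \mathbb{Z},\; \mathbf{y}_i, \mathbf{y}_j \in \mathcal{S}\}$, impose the equivalence $\mathbf{p} \sim \mathbf{q}$ iff $\mathbf{p} - \mathbf{q} \in \mathcal{Y}$, and pick a transversal $\mathcal{P}_0$ of cardinality $2^{\aleph_0}$. Choose preimages $\mathbf{e}_\mathbf{p} \in [0,1)^{n-1}$ with $T(\mathbf{e}_\mathbf{p}) = \mathbf{p}$, and pigeonhole on the countably many pairs $(a, \mathbf{d}) \in \mathbb{Z}^+ \times \mathbb{Z}^{n-1}$ produced by Gallai to isolate a single $(a_0, \mathbf{d}_0)$ used by $2^{\aleph_0}$ of them. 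The collision calculation then transfers unchanged: any common point of $\mathcal{S}_{\mathbf{e}_\mathbf{p}}$ and $\mathcal{S}_{\mathbf{e}_\mathbf{q}}$ forces $a_0(\mathbf{y}_i - \mathbf{y}_j) = \mathbf{q} - \mathbf{p}$, hence $\mathbf{p} \sim \mathbf{q}$, contradicting the choice of transversal.

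The argument is largely mechanical, and the only step needing a brief conceptual comment rather than a direct copy is the assertion that $T$ carries a box in $\mathbb{E}^{n-1}$ to a region with nonempty $k$-dimensional interior in $\mathbb{E}^k$. Concretely, one may select $k$ columns of $A$ that are linearly independent; the restriction of $T$ to the corresponding coordinate $k$-plane of $\mathbb{E}^{n-1}$ is a linear isomorphism onto $\mathbb{E}^k$, and the image of the open $k$-cube inside $(0,1)^{n-1}$ along that plane is therefore a genuine $k$-dimensional open parallelepiped. Nothing else in the two-dimensional proof makes essential use of the ambient dimension.
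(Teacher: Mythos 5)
Your proposal is correct and is essentially the paper's own argument: the paper does not write out a proof of Theorem \ref{noproof}, but instead proves the stronger Theorem \ref{big} by exactly this coset-of-a-lattice, Gallai-pullback, transversal-plus-pigeonhole scheme (with an extra scaling parameter $r$ ranging over a $\mathbb{Z}$-independent set to vary the dilation factor), of which your argument is the $r=1$ case. Your justification that $T$ maps the unit box onto a set with $k$-dimensional interior, and your definition of $\mathcal{Y}$ using all pairs of points of $\mathcal{S}$ (including $\mathbf{0}$), are both fine --- indeed slightly more careful than the paper's wording.
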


Instead of proving Theorem \ref{noproof}, we will state and prove an
even stronger version.  Note that in the proof of Theorem
\ref{notline}, the sets $\mathcal{S}_{\textbf{e}_\textbf{p}}$ all
had the form $a_0\mathcal{S}+\textbf{d}+\textbf{e}$, where $a_0$ was
a positive integer.  Let us call the multiplicative constant $a_0$
the \emph{dilation factor}.  We can use the fact that these dilation
factors are integer-valued to get a stronger result.

\begin{theorem} \label{big}
 Let $n$, $k\in\mathbb{Z}^+$, with $n>k$.  Let $\mathcal{S}$ be an $n$-element subset of $\mathbb{E}^k$, whose points are not all contained in any $(k-1)$-dimensional hyperplane.  If the points of $\mathbb{E}^k$ are colored in finitely many colors, then there exist $2^{\aleph_0}$ positive numbers $r$ such that for each such $r$, there exist $2^{\aleph_0}$ pairwise-disjoint  monochromatic subsets homothetic to $\mathcal{S}$ with dilation factor $r$.
\end{theorem}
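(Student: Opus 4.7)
The plan is to apply the $k$-dimensional extension of Theorem \ref{notline} (the straightforward generalization promised after Theorem \ref{noproof}) not merely to $\mathcal{S}$ itself, but to each rescaling $s\mathcal{S}$ as $s$ ranges over $(0,\infty)$. For each fixed $s>0$, that theorem, applied to $s\mathcal{S}$ under the given coloring, will produce a positive integer $a_0(s)$ and a family of $2^{\aleph_0}$ pairwise-disjoint monochromatic subsets of $\mathbb{E}^k$, each of the form $a_0(s)\cdot(s\mathcal{S})+\textbf{v}=\bigl(a_0(s)\,s\bigr)\mathcal{S}+\textbf{v}$. In other words, for every $s$ I obtain $2^{\aleph_0}$ pairwise-disjoint monochromatic homothetic copies of $\mathcal{S}$, all sharing the common dilation factor $a_0(s)\,s$.

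The crucial observation is that, although each $a_0(s)$ is forced to be a positive integer, the product $a_0(s)\,s$ can realize a continuum of values as $s$ varies. Using the axiom of choice I would fix a function $s\mapsto a_0(s)$ from $(0,\infty)$ to $\mathbb{Z}^+$ recording a valid common dilation factor for each rescaling. This function partitions $(0,\infty)$ into countably many level sets, so by pigeonhole one of them, $\mathcal{B}:=\{s>0:a_0(s)=m_0\}$, has cardinality $2^{\aleph_0}$. The map $s\mapsto m_0\,s$ is injective on $\mathcal{B}$, so the set of dilation factors $\mathcal{R}:=\{m_0\,s:s\in\mathcal{B}\}$ consists of $2^{\aleph_0}$ distinct positive reals, and for each $r\in\mathcal{R}$ the previous paragraph has already supplied $2^{\aleph_0}$ pairwise-disjoint monochromatic copies of $\mathcal{S}$ with dilation factor exactly $r$. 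This is precisely the conclusion of Theorem \ref{big}.

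The only step that is not essentially cosmetic is the $k$-dimensional extension of Theorem \ref{notline} itself, which I expect to be the main (though mild) obstacle. Here the hypothesis that $\mathcal{S}$ is not contained in any $(k-1)$-hyperplane ensures that the linear map $T:\mathbb{E}^{n-1}\to\mathbb{E}^k$ sending the standard basis vectors to the nonzero points of $\mathcal{S}$ is surjective, so $T\bigl([0,1)^{n-1}\bigr)$ contains a nonempty open subset $\mathcal{P}\subset\mathbb{E}^k$. The equivalence relation $\textbf{p}\sim\textbf{q}$ iff $\textbf{p}-\textbf{q}=m(\textbf{y}_i-\textbf{y}_j)$ for some $m\in\mathbb{Z}$ and some $1\le i\le j<n$ still has only countable classes, so a selector $\mathcal{P}_0\subset\mathcal{P}$ has cardinality $2^{\aleph_0}$, and the pigeonhole-and-disjointness argument of Theorem \ref{notline} carries through verbatim. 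With that generalization in hand, the reduction above completes the proof.
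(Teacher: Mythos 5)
Your argument is correct, and it reaches the conclusion by a route that differs from the paper in the step that produces continuum many distinct dilation factors. The per-scale work is essentially the paper's: where you rescale the configuration (applying the $k$-dimensional analogue of Theorem \ref{notline} to $s\mathcal{S}$, with lattice $\mathbb{Z}^{n-1}$), the paper instead rescales the lattice (running the same coset argument over $\mathbb{E}^{n-1}/(r\mathbb{Z})^{n-1}$); these are equivalent up to a linear change of variables, and both rely on the strengthened form of Theorem \ref{notline} that the paper itself points out, namely that the $2^{\aleph_0}$ disjoint copies share a single integer dilation constant. The divergence is in the endgame: the paper lets $r$ range over a maximal subset $\mathcal{R}\subset\mathbb{R}^+$ linearly independent over $\mathbb{Z}$ (obtained via Zorn's lemma), so that the products $ra_0(r)$ are automatically pairwise distinct and $|\mathcal{R}|=2^{\aleph_0}$; you instead pigeonhole the countably many possible values of $a_0(s)$ over all $s\in(0,\infty)$ to fix a single $m_0$, and then distinctness of the dilation factors $m_0s$ is trivial by injectivity of $s\mapsto m_0 s$. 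Your version is a bit more elementary, since it dispenses with the maximal rationally independent set and the fact that such a set has cardinality $2^{\aleph_0}$; the cost is the cardinality pigeonhole (a continuum-sized set is not a countable union of smaller sets), but the paper already uses exactly that fact inside the proof of Theorem \ref{notline}, so nothing new is assumed. Two small caveats, both inherited from the paper rather than introduced by you: you should state explicitly that you are invoking the proof (common integer dilation factor), not just the statement, of the $k$-dimensional Theorem \ref{notline}; and in that extension the relation $\sim$ is not literally transitive as written, so one should work with the countable subgroup generated by the differences $\textbf{y}_i-\textbf{y}_j$ (or otherwise choose $\mathcal{P}_0$ so that no two distinct members differ by an element of $\mathcal{Y}$), which changes nothing since the classes remain countable.
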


%%%%%%%%%%%%%%%%%%%%%%%%%%%%%%%%%%%% Most General Proof %%%%%%%%%%%%%%%%%%%%%%%%%%%%%%%%%%%%%%%%%%%%

\begin{proof}[Proof of Theorem \ref{big}]
  Let $\mathcal{S}\subset\mathbb{E}^k$, with $|\mathcal{S}|=n$.  Assume without loss of generality that $\mathcal{S}$ contains the origin $\textbf{0}$, so that $\mathcal{S}=\{\textbf{0},\textbf{y}_1,\ldots,\textbf{y}_{n-1}\}$.  Let $A$ be a $k$-by-$(n-1)$ matrix whose $j^{\text{th}}$ column is $\textbf{y}_j$ (considered as a $k$-by-1 column vector):
  $$A:=[\textbf{y}_1\textbf{y}_2\cdots\textbf{y}_{n-1}].$$
  Let $T:\mathbb{E}^{n-1}\rightarrow\mathbb{E}^k$ be given by $T(\textbf{v})=A\textbf{v}$.  Note that since the points of $\mathcal{S}$ do not lie on any $(k-1)$-dimensional hyperplane, the set $\{\textbf{y}_1,\ldots,\textbf{y}_{n-1}\}$ spans $\mathbb{E}^k$ and hence $T$ is surjective.

  Let $c\in\mathbb{Z}^+$, and let $\chi:\mathbb{E}^k\rightarrow[c]$ be a coloring of $\mathbb{E}^k$ with $c$ colors.  Define $\chi':\mathbb{E}^{n-1}\rightarrow[c]$ by $\chi'(v)=\chi(T(v))$.  Let $\{\textbf{u}_1,\ldots, \textbf{u}_{n-1}\}$ be the standard basis for $\mathbb{E}^{n-1}$, and let $\mathcal{U}=\{\textbf{0},\textbf{u}_1,\ldots,\textbf{u}_{n-1}\}$.

  Now we partition $\mathbb{E}^{n-1}$ into cosets.  Let $r\in\mathbb{R}^+$, and $r\mathbb{Z}:=\{rn:n\in\mathbb{Z}\}$.  We consider $\mathbb{E}^{n-1}/(r\mathbb{Z})^{n-1}$.  Each coset $\mathcal{C}$ is a translation of $(r\mathbb{Z})^{n-1}$ by a vector $\textbf{e}\in[0,r)^{n-1}$.  Again, we index the cosets by the vectors $\textbf{e}$.

  %We must now partition $\mathbb{E}^{n-1}$ into $2^{\aleph_0}$ parts.  Consider the collection of cosets ${\mathbb{E}^{n-1}}/{\mathbb{Z}^{n-1}}$.  Each coset $\mathcal{C}\in {\mathbb{E}^{n-1}}/{\mathbb{Z}^{n-1}}$ is of the form $\mathbb{Z}^{n-1}+\textbf{e}$ for some $\textbf{e}\in[0,1)^{n-1}$.  Hence $\mathcal{C}$ is a translation of $\mathbb{Z}^{n-1}$.  It will prove useful later in the proof to index the cosets by translation vectors from $[0,1)^{n-1}$, as in $\mathcal{C}_\textbf{e}$.  Note that each $\textbf{e}\in[0,1)^{n-1}$ uniquely identifies a coset.

  Each coset $\mathcal{C}_\textbf{e}$ is colored by $\chi'$, so by Gallai's Theorem on $\mathbb{Z}^n$ we may conclude that there exists some monochromatic subset of $\mathcal{C}_\textbf{e}$ homothetic to $r\mathcal{U}$.  This subset will have the form $ra\mathcal{U}+\textbf{d}+\textbf{e}$, where $a\in\mathbb{Z}^+$ and $\textbf{d}\in(r\mathbb{Z})^{n-1}$.  Now define
  \begin{equation*}
  \mathcal{S}_\textbf{e}=T(ra\mathcal{U}+\textbf{d}+\textbf{e})=\{T(\textbf{v}):\textbf{v}\in ra\mathcal{U}+\textbf{d}+\textbf{e}\}.
  \end{equation*}

  Again, $\mathcal{S}_\textbf{e}$ is a monochromatic.  We have $$\mathcal{S}_\textbf{e}=T(ra\mathcal{U}+\textbf{d}+\textbf{e})=ra\mathcal{S}+T(\textbf{d}+\textbf{e}),$$ so $\mathcal{S}_\textbf{e}$ is homothetic to $\mathcal{S}$ with dilation factor $ra$.

  %It is clear that $\mathcal{S}_\textbf{e}$ is monochromatic (under $\chi$).  To see that $\mathcal{S}_\textbf{e}$ is homothetic to $\mathcal{S}$, we recall that $T$ was defined by $T(\textbf{v})=A\textbf{v}$, where the columns of $A$ were the $\textbf{y}_i$'s, so $T$ sends the unit basis vectors in $\mathcal{U}$ to the $y_i$'s in $\mathcal{U}$.  Hence $T(\mathcal{U})=\mathcal{S}$, and we have
%  \begin{equation*}
%    T(a\mathcal{U}+\textbf{d}+\textbf{e})=aT(\mathcal{U})+T(\textbf{d}+\textbf{e})=a\mathcal{S}+T(\textbf{d}+\textbf{e}).
%  \end{equation*}
%  Therefore $\mathcal{S}_\textbf{e}$ is homothetic to $\mathcal{S}$.  Since there are $2^{\aleph_0}$ cosets, we have $2^{\aleph_0}$ monochromatic homothetic copics of $\mathcal{S}$.

  It remains to show that, while many of these $\mathcal{S}_\textbf{e}$ may overlap, it is possible to find $2^{\aleph_0}$ of them that are pairwise disjoint.

  Since $A$ has rank $k$, the image under $T$ of $[0,r)^{n-1}$ contains a  $k$-dimensional parallelotope $\mathcal{P}$.  Let $\mathcal{Y}=\{m(\textbf{y}_i-\textbf{y}_j):m\in\mathbb{Z},\,\, 1\leq i\leq j<n\}$, and define an equivalence relation $\sim$ on $\mathcal{P}$ as follows: $$\textbf{p}\sim \textbf{q} \text{ if and only if }\textbf{p}-\textbf{q}\in\mathcal{Y}.$$
  Let $\mathcal{P}_0$ be a subset of $\mathcal{P}$ formed by choosing exactly one representative from each equivalence class.    As before, for each $\textbf{p}\in\mathcal{P}_0$ we choose $\textbf{e}_\textbf{p}\in[0,r)^{n-1}$ such that $T(\textbf{e}_\textbf{p})=\textbf{p}$, and let $\mathcal{E}=\{\textbf{e}_\textbf{p}:\textbf{p}\in\mathcal{P}_0\}$.  %Note that $T$ is 1-1 on $\mathcal{E}$, and that $|\mathcal{E}|=|\mathcal{P}_0|=2^{\aleph_0}$.

  Now for each $\textbf{e}_\textbf{p}\in\mathcal{E}$ there is a coset $\mathcal{C}_{\textbf{e}_\textbf{p}}$ which contains a monochromatic set homothetic to $\mathcal{U}$ of the form $ra\mathcal{U}+\textbf{d}+\textbf{e}$, where $a\in\mathbb{Z}^+$ and $\textbf{d}\in(r\mathbb{Z})^{n-1}$.  Again, there are $a_0\in\mathbb{Z}^+$ and $\textbf{d}_0\in(r\mathbb{Z})^{n-1}$ such that for $2^{\aleph_0}$ of the cosets $\mathcal{C}_{\textbf{e}_\textbf{p}}$, the monochromatic subset of $\mathcal{C}_{\textbf{e}_\textbf{p}}$ homothetic to $\mathcal{U}$ has the form $ra_0\mathcal{U}+\textbf{d}_0+\textbf{e}_\textbf{p}$.

  Let $\mathcal{E}'$ be the subset of $\mathcal{E}$ consisting of all $\textbf{e}_\textbf{p}$ such that $ra_0\mathcal{U}+\textbf{d}_0+\textbf{e}_\textbf{p}$ is monochromatic.  Then by the same argument given at the end of the proof of  Theorem \ref{notline}, we have that $\mathcal{S}_{\textbf{e}_\textbf{p}}$ and $\mathcal{S}_{\textbf{e}_\textbf{q}}$ are disjoint for distinct $\textbf{e}_\textbf{p}$ and $\textbf{e}_\textbf{q} $.

  Now let $\mathcal{R}$ be a maximal subset of $\mathbb{R}^+$ that is linearly independent over $\mathbb{Z}$.  For each $r\in\mathcal{R}$ there is some $a_0$ such that there are $2^{\aleph_0}$ pairwise-disjoint subsets $\mathcal{S}_{\textbf{e}_\textbf{p}}$ with dilation factor $ra_0$.  Since $\mathcal{R}$ is linearly independent over $\mathbb{Z}$, no two dilation factors $ra_0$ coincide. Since $\mathcal{R}$ is maximal, $|\mathcal{R}|=2^{\aleph_0}$.  This concludes the proof.
\end{proof}

Finally, we note that the method of using  linear transformations
affords us an easy derivation of Gallai's Theorem on $\mathbb{E}^n$
from Gallai's Theorem on $\mathbb{Z}^n$.  We leave this derivation
to the reader, since  the necessary details are given above.

\section{Acknowledgments}

The author wishes to thank Jacob Manske for very many productive
conversations, and for critiquing an earlier draft of this paper.
This paper is dedicated to my son Thomas, who was born three days after
final acceptance.

%\bibliographystyle{amsplain}
%%\bibliography{../masterrefs}
%\providecommand{\bysame}{\leavevmode\hbox
%to3em{\hrulefill}\thinspace}
%\providecommand{\MR}{\relax\ifhmode\unskip\space\fi MR }
%% \MRhref is called by the amsart/book/proc definition of \MR.
%\providecommand{\MRhref}[2]{%
%  \href{http://www.ams.org/mathscinet-getitem?mr=#1}{#2}
%} \providecommand{\href}[2]{#2}

\end{document}